\documentclass[11pt]{article}
\usepackage[margin=1in]{geometry}
\usepackage{amsmath,amssymb,amsthm}
\usepackage[round,authoryear]{natbib}
\usepackage{microtype}
\usepackage{xcolor}
\definecolor{zaffre}{HTML}{0014A8}
\usepackage[colorlinks=true, linkcolor=zaffre, citecolor=zaffre, urlcolor=zaffre]{hyperref}

\newcommand{\Ee}{\mathbb{E}}
\newcommand{\Pp}{\mathbb{P}}
\newcommand{\RR}{\mathbb{R}}
\newcommand{\one}{\mathbf{1}}

\DeclareMathOperator{\Dir}{Dir}

\theoremstyle{plain}
\newtheorem{theorem}{Theorem}
\newtheorem{proposition}[theorem]{Proposition}
\newtheorem{lemma}[theorem]{Lemma}
\theoremstyle{remark}
\newtheorem{remark}[theorem]{Remark}

\title{An Exact Distribution-Free Test for \\ Means of Nonnegative Random Variables}

\author{Nikos Vlassis\\Adobe Research \and Philip S. Thomas\\University of Massachusetts}

\date{}

\begin{document}
\maketitle

\begin{abstract}
Let $X=(X_1,\ldots,X_n)$ be independent nonnegative random variables, not necessarily identically distributed. Let $D=(D_0,D_1,\ldots,D_n)\sim\Dir(1,\ldots,1)$ be independent of $X$, and define $K(x)=\Pp\{\sum_{i=1}^n x_iD_i\le1\}$. We prove that, for every $n\ge1$, whenever $\Ee X_i\le1$ for every~$i$, $\Pp\{K(X)\le\alpha\}\le\alpha$ for all $0\le\alpha\le1$. Thus $K(X)$ is a finite-sample, distribution-free $p$-value for testing the null hypothesis $\Ee X_i \le 1$ for all $i$. This proves a conjecture of \citet{Gaffke}.
\end{abstract}

\section{Introduction}
\label{sec:intro}

Let $X=(X_1,\ldots,X_n)$ be a vector of independent nonnegative random variables. We consider the one-sided testing problem $H_0:\Ee X_i \le1$ for all $i$ against the alternative that $\Ee X_i>1$ for some $i$. The threshold $1$ entails no loss of generality, since a positive common threshold can be reduced to this case by rescaling the variables. The problem has little distributional structure: the variables need not be identically distributed, continuous, or subject to any shape constraint, yet the goal is a test with finite-sample validity. \citet{Gaffke} proposed a test statistic that uses the available structure efficiently: augment the sample with a zero, average the resulting $n+1$ values using uniform Dirichlet weights, and take the conditional probability that this average is at most $1$. We prove that this probability is a valid one-sided $p$-value under the stated model assumptions and $H_0$.

For $x\in[0,\infty)^n$, let $K(x)=\Pp\{\sum_{i=1}^n x_iD_i\le1\}$, where $D=(D_0,D_1,\ldots,D_n)\sim\Dir(1,\ldots,1)$ is a vector of Dirichlet weights. Equivalently, using the representation $D_i=E_i/\sum_{r=0}^n E_r$, where $E_0,\ldots,E_n$ are i.i.d.\ exponential random variables with rate $1$, we have
\begin{equation}
    \label{eq:Kexp}
    K(x)=\Pp\Big\{\textstyle\sum_{i=1}^n (x_i-1)E_i\le E_0\Big\}.
\end{equation}
It follows that $K$ is nonincreasing in each coordinate and that $K(x)=1$ whenever $x_i\le 1$ for all $i$. The corresponding level-$\alpha$ test rejects when $K(X) \le\alpha$.

\begin{theorem}
    \label{thm:main}
    If $X_1,\ldots,X_n$ are independent, nonnegative, and satisfy $\Ee X_i\le1$, then $\Pp\{K(X)\le\alpha\}\le\alpha$ for every $0\le\alpha\le1$.
\end{theorem}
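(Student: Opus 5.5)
The plan is to reduce to extremal marginal laws and then run an induction on $n$ driven by the exponential representation \eqref{eq:Kexp}.

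\emph{Reduction to extremal laws.} Fix $\alpha$ and the laws of $X_2,\ldots,X_n$. The rejection probability $\Pp\{K(X)\le\alpha\}$ is affine in the law $\mu_1$ of $X_1$. The set of laws on $[0,\infty)$ with $\int x\,d\mu_1\le1$ is convex. By a Choquet/Winkler-type argument, its extreme points are point masses at some $c\le1$ and two-point laws $p\delta_a+(1-p)\delta_b$ with $a<1<b$ and $pa+(1-p)b=1$. Handling the coordinates one at a time, I may therefore assume that every $X_i$ has such a law. Point masses at $c\le1$ can be raised to $c=1$, because $K$ is nonincreasing in each coordinate. A coordinate equal to $1$ drops out of \eqref{eq:Kexp}, so it reduces $n$. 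Next I would try to push the lower atom $a$ down to $0$. Lowering $a$ while raising $b$ to keep the mean equal to $1$ makes $K$ increase on the event $\{X_i=a\}$ and decrease on $\{X_i=b\}$. I expect to show that the net effect does not decrease the rejection probability, using the monotone/convex shape of $b\mapsto K$. If this fails, I will keep general two-point laws throughout. The target reduced problem is: $X_i=b_i\xi_i$ with $\xi_i\sim\mathrm{Bernoulli}(1/b_i)$ independent and $b_i\ge1$.

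\emph{Key identity and induction.} From \eqref{eq:Kexp}, $K(x)=\Ee\big[\exp\{-(\sum_i (x_i-1)E_i)^+\}\big]$. More usefully, conditioning on $E_n$ expresses $K$ for $n$ coordinates as a mixture of $(n-1)$-coordinate tail probabilities with a shifted threshold. Concretely, define
\[
K_t(x)=\Pp\Big\{\textstyle\sum_i (x_i-1)E_i\le E_0+t\Big\},\qquad t\ge0 .
\]
I would prove the strengthened statement $\Pp\{K_t(X)\le\alpha\}\le\alpha$ for all $t\ge0$ by induction on $n$; note that $K_t$ is also nonincreasing in each coordinate. The base case $n=1$ is explicit. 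There $K_t(x)=1$ for $x\le1$ and $K_t(x)=1-\tfrac{x-1}{x}e^{-t/(x-1)}$ for $x>1$. With $X=b\xi$, rejection at level $\alpha$ requires $K_t(b)\le\alpha$, which happens with probability $1/b$. One then checks that $1/b\le K_t(b)$, i.e.\ $1-\tfrac1b\ge \tfrac{b-1}{b}e^{-t/(b-1)}$, which is immediate. For the inductive step I condition on $X_n$. If $X_n=0$, the coordinate contributes $-E_n$, which acts like an extra exponential added to $E_0$. If $X_n=b_n$, the coordinate contributes $(b_n-1)E_n$, which shifts the threshold downward. So $K_t(X)$ is an average over $E_n$ of $K_{t'}(X_{1:n-1})$ with random thresholds $t'$, and the induction hypothesis controls each piece.

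\emph{Main obstacle.} The hard step is this inductive combination. The induction hypothesis bounds the distribution of each $K_{t'}(X_{1:n-1})$ separately, but $K_t(X)$ is an \emph{average} over $t'$ of correlated quantities. Averaging $p$-values does not in general preserve validity; the constant $2$ loss is the generic obstacle. I would first try to exploit the fact that all $K_{t'}(X_{1:n-1})$ are comonotone functions of the same vector $X_{1:n-1}$. This is because each is nonincreasing in $x$, and more strongly I would try to show they are ordered through a single scalar statistic in the two-point reduced model. Comonotonicity would make the average a monotone function of one variable, and the bound would then follow from a one-dimensional rearrangement inequality. If exact comonotonicity fails, the fallback is to write the $2^{n}$ atoms of the reduced model explicitly. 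One then verifies $\Pp\{K(X)\le\alpha\}\le\alpha$ as a finite family of inequalities between $K$ at subsets of active coordinates, namely $\sum_{S:\,K(b_S)\le\alpha}\prod_{i\in S}b_i^{-1}\prod_{i\notin S}(1-b_i^{-1})\le\alpha$. This can be attacked via the up-set structure of $\{S: K(b_S)\le\alpha\}$ and a Harris/Kleitman-type argument.
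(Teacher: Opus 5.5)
Your reduction is fine and does the same job as the paper's Lemma~\ref{lem:decomp} plus rescaling. The rejection probability is a bounded measurable, not continuous, affine functional of each marginal, so the barycentric identity must be used for measurable test functions, but that is routine. The push of the low atom to $0$ is unproven and not needed: the paper keeps general low atoms, and its argument depends on inserting variables in order of nonincreasing $\gamma_i=1-a_i$.

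The genuine gap is the inductive step, which is the entire content of the theorem. Your scheme breaks down concretely there. In the high branch $X_n=b_n$, conditioning on $E_n$ gives $K_t(X)=\Ee\,K_{t-(b_n-1)E_n}(X_{1:n-1})$, and the threshold $t'=t-(b_n-1)E_n$ is negative with positive probability. For $t'<0$ your strengthened hypothesis is false. By monotonicity, $K_{t'}(X_{1:n-1})\le K_{t'}(0,\ldots,0)=\Pp\{E_0+E_1+\cdots+E_{n-1}\ge|t'|\}<1$ surely, so at $\alpha$ equal to this value the rejection probability is $1$. The pieces of the high branch are therefore anti-conservative. Validity can only emerge after mixing the two branches with weights $p_n$ and $1-p_n$, and bounding that mixture is exactly what has to be proved. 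So ``the induction hypothesis controls each piece'' does not hold.

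Your proposed remedies do not close this gap. Comonotonicity of the $K_{t'}(X_{1:n-1})$ across thresholds is not established. Even if it held, it would only show that the low branch, where $t'\ge0$, is conservative, not that it is conservative enough to absorb the high branch. The atom-by-atom fallback merely restates the claim. Harris--Kleitman positive correlation of up-sets under a product measure gives no upper bound on $\pi\{S:K(S)\le\alpha\}$ in terms of $K$.

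The missing idea is a comparison measure calibrated to $K$. For a maximal chain $\varnothing=S_0\subset\cdots\subset S_n=[n]$, put mass $K(S_j)-K(S_{j+1})$ on $S_j$ and $K(S_n)$ on $S_n$. This chain measure gives any up-set mass $K$ at its lowest state on the chain, hence at most $\alpha$ for the rejection set. The paper then proves $\Ee_\pi h\le\Ee_{\nu_{\mathcal C}}h$ for every increasing $h$ and some chain $\mathcal C$ depending on $h$. It does this by inserting $X_1,\ldots,X_n$ one at a time, sorted by $\gamma_i$, at random positions with weights $\theta_J-\theta_{J+1}$. Each insertion moves mass only upward within pairs $\{C_j,C_j\cup\{k\}\}$. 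The reason is that the transfer coefficient equals $\frac{a-c}{c+d}\,w_{j-1}(\theta_{j-1}-\theta_j)$ by a Stein identity for exponential shifts. Here $a\ge c$ comes from the sorting, and $\theta_{j-1}\ge\theta_j$ from a log-concavity (TP$_2$) likelihood-ratio inequality. Nothing in your plan plays the role of this calibrated measure or of the monotonicity of $\theta_j$.
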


\citet{Gaffke} introduced $K$ and conjectured this finite-sample validity. For i.i.d.\ samples with common mean $\mu$, he established the limiting behavior of $K$, which converges almost surely to $1$ for $\mu<1$, almost surely to $0$ for $\mu>1$, and in distribution to $\mathrm{Unif}(0,1)$ for $\mu=1$ with finite, positive variance. He further reduced the non-identically distributed problem, at each fixed level~$\alpha$, to independent mean-one two-point marginals, proved the case $n=2$ within that family, and reported numerical verification up to $n=15$. In the related confidence-bound formulation for i.i.d.\ variables supported on $[0,1]$, \citet{LearnedMillerThomas} proved guaranteed coverage for Bernoulli and half-Bernoulli distributions. We prove the conjecture in full.\footnote{AI tools assisted with the development of this proof, including ideation, derivations, and writing.}

\paragraph{Proof outline.}

We prove the theorem first for mean-one two-point variables (Section~\ref{sec:twopoint}, with the key lemma proved in Section~\ref{sec:locallift}); the general case then follows by decomposition and rescaling (Section~\ref{sec:global}). In the two-point system, each $X_i$ takes a low value in $[0,1]$ or a high value above $1$, with the probabilities forced by the mean-one constraint. Recording a random outcome by the set $A\subseteq[n]=\{1,\ldots,n\}$ of variables that came out high, the law of $A$ is a product measure $\pi$, the statistic depends only on the high set, with value $K(A)$, and the statement to prove reads $\pi\{S:K(S)\le\alpha\}\le\alpha$.

The key result is that, for every payoff function $h:2^{[n]}\to\RR$ that is increasing with respect to set inclusion, there is a maximal chain ${\mathcal C}$ in $2^{[n]}$, and a measure $\nu_{\mathcal C}$ calibrated to $K$ along the chain (Section~\ref{sec:twopoint}), such that $\Ee_{\pi}h\le\Ee_{\nu_{\mathcal C}}h$. Taking $h$ to be the indicator of the rejection set $\{S:K(S)\le\alpha\}$, increasing by monotonicity of $K$, calibration bounds its $\nu_{\mathcal C}$-mass by $\alpha$, which proves the two-point case (Proposition~\ref{prop:twopoint}).

The dominating chain is built by induction. With the $X_i$ indexed so that their low values are in nondecreasing order, stage $k$ moves $X_k$ into the chain of $X_1,\ldots,X_{k-1}$, at a position chosen by a local lemma (Section~\ref{sec:locallift}), so that the mean of $h$ under the hybrid measure of placed and unplaced variables never decreases. The local lemma exhibits this insertion as an upward mass transport, controlled by a Stein-type identity for exponential shifts and a likelihood-ratio inequality.

\section{Two-point systems, chain measures, and validity}
\label{sec:twopoint}

We first prove Theorem~\ref{thm:main} for mean-one two-point variables $X_i$; the reduction to general marginals is given in Section~\ref{sec:global}. The two-point system is parametrized by
\begin{equation}
    \label{eq:twopointlaw}
    X_i\in\{1-\gamma_i,\ 1+\beta_i\},
    \qquad 0\le\gamma_i\le1,
    \qquad \beta_i>0,
\end{equation}
with
\begin{equation}
    \label{eq:pi}
    \Pp\{X_i=1+\beta_i\}=p_i=\frac{\gamma_i}{\gamma_i+\beta_i},
    \qquad
    \Pp\{X_i=1-\gamma_i\}=1-p_i=\frac{\beta_i}{\gamma_i+\beta_i}.
\end{equation}
Then $\Ee X_i=1$. We index the variables as $X_1,X_2,\ldots,X_n$ so that $\gamma_1\ge\gamma_2\ge\cdots\ge\gamma_n$. Since $K$ is invariant under simultaneous relabeling of the variables and their parameters, this sorting entails no loss of generality.

An outcome of the system is encoded by its high set. For $0\le m\le n$ and $S\subseteq[m]$, let $K_m(S)$ denote the value of $K$ at the outcome of the subsystem $X_1,\ldots,X_m$ with high set $S$:
\begin{equation}
    \label{eq:Kcube}
    K_m(S)=
    \Pp\left\{
        \sum_{i\in S}\beta_iE_i
        \le
        E_0+\sum_{i\in[m]\setminus S}\gamma_iE_i
    \right\},
    \qquad S\subseteq[m],
\end{equation}
where $E_0,E_1,\ldots,E_n$ are independent unit exponential variables; write $K=K_n$. For $0\le k\le n$, let $\pi_{>k}$ be the law of the high set of the tail variables $X_{k+1},\ldots,X_n$, the product measure on $2^{\{k+1,\ldots,n\}}$:
\begin{equation}
    \label{eq:productmeasure}
    \pi_{>k}(\{T\})
    =\prod_{i\in T}p_i\prod_{k<i\le n,\ i\notin T}(1-p_i),
    \qquad T\subseteq\{k+1,\ldots,n\}.
\end{equation}
Write $\pi=\pi_{>0}$ for the level-$n$ product law; if $A$ is the random set of high variables, then $A\sim\pi$ and $K(X)=K(A)$. For $m\ge1$, $K_m(\varnothing)=1$ follows from \eqref{eq:Kcube}, the left sum being empty; at $m=0$ we use the convention $K_0(\varnothing)=1$. (The tail law needs no convention: at $k=n$ the empty products in \eqref{eq:productmeasure} give $\pi_{>n}(\{\varnothing\})=1$.)

\begin{lemma}[Monotonicity]
    \label{lem:mono}
    For fixed sets $A\subseteq B\subseteq[m]$, we have $K_m(B)\le K_m(A)$.
\end{lemma}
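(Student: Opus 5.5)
The plan is a direct event-inclusion (coupling) argument on the common probability space of the exponentials $E_0,E_1,\ldots,E_n$, using only the nonnegativity of the $\beta_i,\gamma_i$ and of the $E_i$. By \eqref{eq:Kcube}, $K_m(S)=\Pp\{L_S\le R_S\}$ with
\[
L_S=\sum_{i\in S}\beta_iE_i,\qquad R_S=E_0+\sum_{i\in[m]\setminus S}\gamma_iE_i .
\]
It therefore suffices to show the pointwise inclusion of events $\{L_B\le R_B\}\subseteq\{L_A\le R_A\}$ whenever $A\subseteq B\subseteq[m]$. Monotonicity of probability then gives $K_m(B)\le K_m(A)$.

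For the inclusion, I would fix a realization of $(E_0,\ldots,E_m)$, all coordinates nonnegative, and compare the two sums term by term. Since $A\subseteq B$, we have $L_B=L_A+\sum_{i\in B\setminus A}\beta_iE_i\ge L_A$, because $\beta_i>0$ and $E_i\ge0$. Likewise $[m]\setminus B\subseteq[m]\setminus A$, so $R_A=R_B+\sum_{i\in B\setminus A}\gamma_iE_i\ge R_B$, because $\gamma_i\ge0$. Hence on the event $L_B\le R_B$ we get $L_A\le L_B\le R_B\le R_A$, which is the claimed inclusion.

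Equivalently, one can argue one element at a time: adding a single index $j$ to $S$ moves $\beta_jE_j$ onto the left side and removes $\gamma_jE_j$ from the right. Both changes weakly shrink the event, and the general case follows by iterating along any chain from $A$ to $B$.

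The degenerate case $m=0$ is immediate, since then $A=B=\varnothing$. The case $A=\varnothing$ is consistent with the convention $K_m(\varnothing)=1$, because the left sum is empty and the right side is nonnegative. I do not expect any real obstacle here. The only point needing care is that the same exponentials are used for both $A$ and $B$, so the comparison is pathwise rather than merely distributional. This is legitimate because \eqref{eq:Kcube} defines every $K_m(S)$ through one fixed vector $(E_0,\ldots,E_n)$.
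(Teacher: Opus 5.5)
Your proof is correct and is essentially the paper's argument. The paper reduces to $B=A\cup\{i\}$ and notes that the term of variable $i$ moves from $\gamma_iE_i$ on the right to $\beta_iE_i$ on the left, which gives the same pathwise event inclusion as your one-element-at-a-time variant; your chain $L_A\le L_B\le R_B\le R_A$ simply handles all of $B\setminus A$ at once.
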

\begin{proof}
    It suffices to consider $B=A\cup\{i\}$. The defining events of $K_m(A)$ and $K_m(B)$ differ only in the contribution of variable $i$: $\gamma_iE_i$ on the right side for $A$, $\beta_iE_i$ on the left side for $B$. Since both coefficients are nonnegative, the event for $B$ is contained in the event for $A$.
\end{proof}

We now define the objects the proof works with. A \emph{payoff} is a function $h:2^{[n]}\to\RR$; it is \emph{increasing} if $h(A)\le h(B)$ for all fixed sets $A\subseteq B\subseteq[n]$. A \emph{maximal chain at level $m$}, for $0\le m\le n$, is an increasing sequence $\varnothing=S_0\subset S_1\subset\cdots\subset S_m=[m]$ in which consecutive sets differ by exactly one variable. (All chains below are maximal, and we drop the qualifier.) By Lemma~\ref{lem:mono}, the numbers
\begin{equation}
    \label{eq:chainmeasure}
    q_j=K_m(S_j)-K_m(S_{j+1}),
    \quad 0\le j\le m-1,
    \qquad
    q_m=K_m(S_m),
\end{equation}
are nonnegative, and they telescope to $\sum_{j=0}^{m} q_j=K_m(S_0)=K_m(\varnothing)=1$. The \emph{chain measure} $\nu_{\mathcal C}$ places mass $q_j$ on $S_j$; we call the sets $S_j$ the \emph{states} of the chain.

Chains correspond to orderings: every ordering (permutation) $\sigma$ of the variables $X_1,\ldots,X_m$ defines a chain at level $m$ via $S_j=\{\sigma(1),\ldots,\sigma(j)\}$, for $j=1,\ldots,m$ (and $S_0=\varnothing$ always), and every chain arises from a unique ordering. The dominating chain is built by induction, one stage at a time, starting from the trivial chain $\mathcal{C}_0=(\varnothing)$ at level $0$: stage $k$ inserts $X_k$ into the ordering of the chain $\mathcal{C}_{k-1}$ at level $k-1$ produced by the previous stages, yielding a chain $\mathcal{C}_k$ at level $k$. (The insertion may change the states after the insertion point, so $\mathcal{C}_k$ need not be an extension of $\mathcal{C}_{k-1}$; only the relative order of $X_1,\ldots,X_{k-1}$ is preserved.) Each stage of the induction defines a measure on the fixed cube $2^{[n]}$: after stage $k$, the placed variables $X_1,\ldots,X_k$ follow the chain measure of $\mathcal{C}_k$, while the tail variables $X_{k+1},\ldots,X_n$ remain independent. Write $\mu_k$ for this \emph{hybrid} measure,
\begin{equation}
    \label{eq:hybrid}
    \mu_k=\nu_{\mathcal{C}_k}\otimes\pi_{>k},
\end{equation}
the law of $S\cup T_k$, where $S\sim\nu_{\mathcal{C}_k}$ and $T_k\sim\pi_{>k}$ is an independent high set of the tail. Under $\mu_k$ the placed variables are maximally dependent and the rest untouched. At the two ends, $\mu_0=\pi$ (the level-$0$ chain is trivial, so all variables are independent) and $\mu_n=\nu_{\mathcal{C}_n}$ (no tail remains).

The insertion position at stage $k$ is chosen according to Lemma~\ref{lem:cutlift}, so that the mean of an increasing payoff never decreases: $\Ee_{\mu_{k-1}}h\le\Ee_{\mu_k}h$, for $k=1,\ldots,n$. Granting that lemma, the two-point case of Theorem~\ref{thm:main} follows:

\begin{proposition}[Two-point validity]
    \label{prop:twopoint}
    For every independent mean-one two-point system \eqref{eq:twopointlaw}--\eqref{eq:pi} and every $\alpha\in[0,1]$,
    \begin{equation}
        \label{eq:twopointValidity}
        \pi\{S:K(S)\le\alpha\}\le\alpha.
    \end{equation}
\end{proposition}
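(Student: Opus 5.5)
We take as given the stage-wise inequality $\Ee_{\mu_{k-1}}h\le\Ee_{\mu_k}h$ for every increasing payoff $h$, which the text grants from Lemma~\ref{lem:cutlift}. Fix $\alpha\in[0,1]$ and let $h=\one\{S\subseteq[n]:K(S)\le\alpha\}$. The proof then has three steps.

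First, $h$ is increasing. If $A\subseteq B$ and $K(A)\le\alpha$, then Lemma~\ref{lem:mono} (with $m=n$) gives $K(B)\le K(A)\le\alpha$. Hence $h(A)\le h(B)$.

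Second, we chain the stage inequalities for $k=1,\ldots,n$. Using $\mu_0=\pi$ and $\mu_n=\nu_{\mathcal C_n}$, this gives
\[
\pi\{S:K(S)\le\alpha\}=\Ee_{\mu_0}h\le\Ee_{\mu_n}h=\nu_{\mathcal C_n}\{S:K(S)\le\alpha\}.
\]

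Third, we show the calibration bound $\nu_{\mathcal C}\{S:K(S)\le\alpha\}\le\alpha$ for any chain $\varnothing=S_0\subset\cdots\subset S_n=[n]$.

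By Lemma~\ref{lem:mono}, the values $K(S_j)$ are nonincreasing in $j$. Consequently the states with $K(S_j)\le\alpha$ form a terminal segment $\{S_{j^*},\ldots,S_n\}$, where $j^*$ is the smallest index with $K(S_{j^*})\le\alpha$. If no such index exists, the mass is $0$ and there is nothing to prove.

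The mass of this segment telescopes by \eqref{eq:chainmeasure}:
\[
\sum_{j=j^*}^{n-1}\bigl(K(S_j)-K(S_{j+1})\bigr)+K(S_n)=K(S_{j^*})\le\alpha.
\]

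One caveat: distinct indices give distinct sets along a chain, so no state is counted twice. This completes the argument.

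The only delicate point is the reduction, namely the validity of the stage-wise lift from Lemma~\ref{lem:cutlift}. That lemma is assumed here, and given it the rest is pure bookkeeping.
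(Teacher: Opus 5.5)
Your three steps match the paper's proof, and each is carried out correctly: monotonicity of $h=\one_{\{K(\cdot)\le\alpha\}}$ from Lemma~\ref{lem:mono}, chaining the stage inequalities from $\mu_0=\pi$ to $\mu_n=\nu_{\mathcal C_n}$, and telescoping the chain measure over the terminal segment of rejected states. What is missing is the one reduction the paper performs before invoking the stage inequality: deleting the variables with $\gamma_i=0$.

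The proposition allows $0\le\gamma_i\le1$, but Lemma~\ref{lem:cutlift} is proved only under the standing assumption $0<\gamma_i\le1$ of Section~\ref{sec:locallift}. The proof actually uses this. At a stage with $c=\gamma_k=0$, the sentinel gives $A_k=0$ and $B_k=\Pp\{G_k\ge 0\}=0$, so $w_k=0$. Then $\theta_k=u_k/w_k$, and with it the weight $\lambda_{k-1}$, is undefined. In addition, Lemma~\ref{lem:transfer} is stated and proved only for $c>0$. So the claim that ``the text grants the stage-wise inequality'' holds only after the system has been reduced to $\gamma_i>0$.

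The repair is short, and it is what the paper does. If $\gamma_i=0$, then $p_i=0$ and $X_i=1$ almost surely. Every set containing $i$ has $\pi$-mass zero, and on sets not containing $i$ the term $\gamma_iE_i$ vanishes from \eqref{eq:Kcube}. Hence deleting such variables leaves $\pi\{S:K(S)\le\alpha\}$ unchanged, and the remaining variables stay sorted (if none remain, the $n=0$ case is trivial).

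Alternatively, you can check the stage inequality directly at a stage with $\gamma_k=0$. There $\widehat\nu=\nu_{\mathcal C}$ and $K_k(C_j)=K_{k-1}(C_j)$ for all $j$. So inserting $X_k$ at the top of the chain only moves mass $K_k([k])$ from $C_{k-1}$ up to $[k]$, which is an upward transfer. With either patch your proof is complete and coincides with the paper's.
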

\begin{proof}
    Variables with $\gamma_i=0$ are deterministic at $1$ and contribute $(X_i-1)E_i=0$ in \eqref{eq:Kexp}; deleting them changes neither the realized value of $K(X)$ nor the probability in question, so we may assume $\gamma_i>0$ for all $i$, and the remaining variables are still sorted. (If every variable is deleted, the system that remains is the $n=0$ convention.)

    Fix $\alpha\in[0,1]$. The payoff $h=\one_{\{K(\cdot)\le\alpha\}}$ is increasing by Lemma~\ref{lem:mono}, so the chains $\mathcal C_1,\ldots,\mathcal C_n$ produced by the induction for this $h$ satisfy
    \begin{equation}
        \Ee_{\pi}h
        =\Ee_{\mu_0}h
        \le\Ee_{\mu_1}h
        \le\cdots
        \le\Ee_{\mu_n}h
        =\Ee_{\nu_{\mathcal{C}_n}}h,
    \end{equation}
    each inequality being an application of Lemma~\ref{lem:cutlift} at the corresponding stage. Write the final chain $\mathcal C_n: \varnothing=S_0\subset\cdots\subset S_n=[n]$. Since the rejection set $\{S:K(S)\le\alpha\}$ is upward closed and the chain is ascending, the rejected states form a terminal segment $S_t\subset\cdots\subset S_n$ of the chain, possibly empty; its mass under $\nu_{\mathcal C_n}$ telescopes, by \eqref{eq:chainmeasure}, to $\sum_{j=t}^{n}q_j=K(S_t)$. Hence $\Ee_{\nu_{\mathcal C_n}}h=K(S_t)\le\alpha$ if some state is rejected, and $\Ee_{\nu_{\mathcal C_n}}h=0\le\alpha$ otherwise. This proves~\eqref{eq:twopointValidity}.
\end{proof}

\begin{remark}
    \label{rem:mixture}
    The dominating chain constructed by the induction depends on the payoff $h$, but this dependence can be removed: there is a random ordering $\sigma$ of $X_1,\ldots,X_n$, with law depending only on the parameters $\gamma_i,\beta_i$ of the system, whose chain measure $\nu_\sigma$ dominates~$\pi$ on average, i.e., $\Ee_\pi h\le\Ee_\sigma[\Ee_{\nu_\sigma}h]$ for every increasing $h$. This single measure certifies \eqref{eq:twopointValidity} at every level $\alpha$ at once. The law of $\sigma$ can be obtained by inserting each variable at a position drawn at random with the weights $\lambda_J$ of Lemma~\ref{lem:cutlift}; the domination follows by iterating \eqref{eq:cutliftReward}.
\end{remark}

\section{Proof of the local insertion lemma}
\label{sec:locallift}

This section proves the lemma that selects the insertion position at each stage of the induction. Assume throughout, as in the proof of Proposition~\ref{prop:twopoint}, that the variables with $\gamma_i=0$ have been deleted, so that $0<\gamma_i\le1$ and $\beta_i>0$ for all $i$, with $\gamma_1\ge\cdots\ge\gamma_n$. Fix a stage $k\in\{1,\ldots,n\}$, and let $\mathcal C=\mathcal C_{k-1}$ be the chain produced by the previous stages, with states $\varnothing=C_0\subset C_1\subset\cdots\subset C_{k-1}=[k-1]$. Set $H_j=C_j\cup\{k\}$, $c=\gamma_k$, $d=\beta_k$, and $p=p_k=c/(c+d)$. Inserting $X_k$ after $C_J$ gives the chain at level $k$
\begin{equation}
    \label{eq:liftedpath}
    C_0\subset\cdots\subset C_J\subset H_J\subset H_{J+1}\subset\cdots\subset H_{k-1},
\end{equation}
denoted $\mathcal C^J$, for $0\le J\le k-1$, with chain measure $\nu_{\mathcal C^J}$ computed with $K_k$. Write also $\widehat\nu$ for $\nu_{\mathcal C}$ with $X_k$ revealed independently: the law of $S\cup T$, where $S\sim\nu_{\mathcal C}$ and $T=\{k\}$ with probability $p$, else $T=\varnothing$. Section~\ref{sec:twopoint} uses Lemma~\ref{lem:cutlift} below with $g(S)=\Ee\,h(S\cup T_k)$, $T_k\sim\pi_{>k}$: then $\Ee_{\nu_{\mathcal C^J}}g$ is the hybrid mean \eqref{eq:hybrid} of the inserted chain, and $\Ee_{\widehat\nu}\,g=\Ee_{\mu_{k-1}}h$, the stage-$(k-1)$ tail being an independent reveal of $X_k$ together with $T_k$.

For $0\le j\le k-1$, let $G_j$ be the difference of the two sides of the comparison in \eqref{eq:Kcube} defining $K_{k-1}(C_j)$,
\begin{equation}
    \label{eq:slack}
    G_j=E_0+
      \sum_{\ell\in[k-1]\setminus C_j}\gamma_\ell E_\ell
      -\sum_{\ell\in C_j}\beta_\ell E_\ell.
\end{equation}
Along the chain, consecutive $G_{j-1}$ and $G_j$ agree except for one exponential, which enters $G_{j-1}$ with coefficient $+\gamma_\ell$ and $G_j$ with $-\beta_\ell$. The following lemma, the analytic core of the proof, compares such a pair.

\begin{lemma}[Exponential transfer]
    \label{lem:transfer}
    Let $Y$, $E$, $E'$ be independent random variables, with $E$ and $E'$ unit exponential, and let $a,b,c,d>0$. Set
    \begin{equation}
        \label{eq:Zpm}
        Z_+=Y+aE,
        \qquad
        Z_-=Y-bE,
    \end{equation}
     and, for $\varepsilon\in\{+,-\}$,
    \begin{equation}
        F_\varepsilon=\Pp\{Z_\varepsilon\ge0\},
        \qquad
        A_\varepsilon=\Pp\{Z_\varepsilon\ge dE'\},
        \qquad
        B_\varepsilon=\Pp\{Z_\varepsilon\ge -cE'\}.
    \end{equation}
    Write $u_\varepsilon=F_\varepsilon-A_\varepsilon$, $w_\varepsilon=B_\varepsilon-A_\varepsilon$, and $\theta_\varepsilon=u_\varepsilon/w_\varepsilon$. Then $w_+,w_->0$ and, with $p=c/(c+d)$,
    \begin{equation}
        \label{eq:transfer}
        (1-\theta_-)(A_+-A_-)-p\,(F_+-F_-)
        =\frac{a-c}{c+d}\,w_+\,(\theta_+-\theta_-).
    \end{equation}
    Moreover, if $Y$ has a log-concave density, then
    \begin{equation}
        \label{eq:monge}
        \theta_+\ \ge\ \theta_-.
    \end{equation}
\end{lemma}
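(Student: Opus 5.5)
The plan is to turn every probability in the statement into an expectation against the law of $Z_\varepsilon$, by integrating out $E'$ conditionally on $Z_\varepsilon$. For a unit exponential $E'$ independent of $Z$, conditioning gives
\[
\Pp\{Z\ge dE'\}=\Ee\bigl[(1-e^{-Z/d})\one_{\{Z\ge0\}}\bigr],
\qquad
\Pp\{Z\ge -cE'\}=\Ee\bigl[\one_{\{Z\ge0\}}+e^{Z/c}\one_{\{Z<0\}}\bigr].
\]
Hence
\[
u_\varepsilon=\Ee\bigl[e^{-Z_\varepsilon/d}\one_{\{Z_\varepsilon\ge0\}}\bigr],
\qquad
v_\varepsilon:=w_\varepsilon-u_\varepsilon=\Ee\bigl[e^{Z_\varepsilon/c}\one_{\{Z_\varepsilon<0\}}\bigr].
\]
Both integrands are nonnegative and their sum is strictly positive everywhere, so $w_\varepsilon>0$. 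This settles the first claim and gives the representation $\theta_\varepsilon=u_\varepsilon/(u_\varepsilon+v_\varepsilon)$.

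For the identity \eqref{eq:transfer}, I would multiply through by $w_-$ and use $\theta_+w_+=u_+$. The claim then becomes the bilinear relation
\[
v_-\,(A_+-A_-)-p\,w_-\,(F_+-F_-)=\frac{a-c}{c+d}\,\bigl(u_+w_--u_-w_+\bigr).
\]
Next I would push everything down to functionals of $Y$ by integrating out $E$. Because $Z_\pm=Y\pm(\cdot)E$ with $E$ exponential, each of $F_\pm,A_\pm,u_\pm,v_\pm$ becomes an explicit linear combination of a few Laplace-type functionals of $Y$: namely $\Pp\{Y\ge0\}$, $U(s)=\Ee[e^{-sY}\one_{\{Y\ge0\}}]$ and $V(s)=\Ee[e^{sY}\one_{\{Y<0\}}]$ for $s\in\{1/a,1/b,1/c,1/d\}$. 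The coefficients are rational in $a,b,c,d$; for instance $\Ee[e^{-(Y+aE)/d}\one\{\cdot\}]$ splits according to the sign of $Y$ with factors like $d/(d+a)$. Alternatively, the Stein-type identity $\Ee\varphi(Y+aE)-\Ee\varphi(Y)=a\,\Ee\varphi'(Y+aE)$ organizes these computations. With these expressions in hand, the identity reduces to a polynomial identity among the functionals, to be checked term by term. I expect this bookkeeping to be the main obstacle. The products $u_+w_-$ and $u_-w_+$ produce cross terms that must cancel exactly against the left side, and the factor $(a-c)/(c+d)$ suggests that the $c$-kernel $e^{z/c}$ and the $a$-shift interact in a specific way. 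I would first test the computation on $Y$ a point mass, where everything is elementary, to fix signs, and then extend by linearity in the law of $Y$ for the single-law terms. The genuinely bilinear terms need the cancellation verified symbolically.

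For \eqref{eq:monge}, note that $\theta_+\ge\theta_-$ is equivalent to $u_+v_-\ge u_-v_+$. If $Y$ has a log-concave density, then $Z_-=Y-bE$ is log-concave, being a convolution of log-concave densities. Moreover, a log-concave variable is dominated in likelihood-ratio order by itself plus an independent nonnegative log-concave shift. Applying this twice gives $Z_-\le_{\mathrm{lr}}Y\le_{\mathrm{lr}}Z_+$, so with densities $f_\pm$ we have $f_+(x)f_-(y)\ge f_-(x)f_+(y)$ for $x\ge y$. Writing
\[
u_+v_--u_-v_+=\iint_{x\ge0>y}e^{-x/d}e^{y/c}\bigl[f_+(x)f_-(y)-f_-(x)f_+(y)\bigr]\,dx\,dy,
\]
the integrand is nonnegative, which proves \eqref{eq:monge}.
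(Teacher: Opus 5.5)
The gap is in the identity \eqref{eq:transfer}, which is the substance of the lemma. Your treatment of $w_\pm>0$ and of \eqref{eq:monge} is essentially the paper's: condition on $Z_\varepsilon$, then integrate the likelihood-ratio inequality $f_+(x)f_-(y)\ge f_-(x)f_+(y)$ against $e^{-x/d}e^{y/c}$ over $x\ge0>y$. For the identity, however, you reduce it to a bilinear relation among Laplace-type functionals of $Y$ and leave the cancellation ``to be verified symbolically,'' flagging it yourself as the main obstacle. That is a plan for a computation, not a proof. The plan also has its own wrinkles. In the resonant cases $a=c$ or $b=d$, your functionals $U(s),V(s)$ no longer suffice; for $a=c$ one gets $v_+=\Ee[(-Y/a)e^{Y/c}\one_{\{Y<0\}}]$. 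So a continuity argument would be needed, and $a=c$ is exactly the tie $\gamma_i=\gamma_k$ that the application must allow.

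The missing idea is to apply the Stein identity to the two test functions you already wrote down, $\alpha(x)=(1-e^{-x/d})\one_{\{x\ge0\}}$ and $\beta(x)=\one_{\{x\ge0\}}+e^{x/c}\one_{\{x<0\}}$, rather than to Laplace kernels of $Y$.

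Apply your one-sided identity to the shift $Y\to Y+aE$ and, mirrored, to $Y-bE\to Y$, then add. This gives $\Ee\,\phi(Z_+)-\Ee\,\phi(Z_-)=a\,\Ee\,\phi'(Z_+)+b\,\Ee\,\phi'(Z_-)$. Now $\alpha'(x)=\tfrac1d e^{-x/d}\one_{\{x>0\}}$ and $\beta'(x)=\tfrac1c e^{x/c}\one_{\{x<0\}}$ are, up to constants, exactly the integrands of $u_\varepsilon$ and $v_\varepsilon$. So you get linear flux relations in the lemma's own quantities: $d(A_+-A_-)=au_++bu_-$ and $c(B_+-B_-)=av_++bv_-$.

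Add these two relations and use $B_\varepsilon=A_\varepsilon+w_\varepsilon$; separately rewrite the second via $B_\varepsilon=F_\varepsilon+v_\varepsilon$. This yields $(c+d)(A_+-A_-)=(a-c)w_++(b+c)w_-$ and $c(F_+-F_-)=(a-c)v_++(b+c)v_-$. Substitute these into $(c+d)$ times the left side of \eqref{eq:transfer} and use $v_\varepsilon=(1-\theta_\varepsilon)w_\varepsilon$. The $(b+c)$ terms cancel, and the $(a-c)$ terms leave $(a-c)w_+(\theta_+-\theta_-)$. No bilinear cancellation ever needs checking, and this is where the factor $(a-c)/(c+d)$ comes from.

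A smaller point on \eqref{eq:monge}. The step $Z_-\le_{\mathrm{lr}}Y$ does not follow by applying your domination fact to $Z_-$, because $Y-Z_-=bE$ is not independent of $Z_-$. It follows instead from the mirrored fact for $Y$ with the independent nonpositive shift $-bE$ (apply the fact to $-Y$). So the log-concavity of $Z_-$ is beside the point. Chaining the two orders through $Y$ also needs a check at points where $f$ vanishes. The paper avoids both issues by proving $f(x-as)f(y+bt)\ge f(x+bt)f(y-as)$ directly for $x\ge y$, $s,t\ge0$, and integrating against $e^{-s-t}$.
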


\begin{proof}
    Since $E'$ is independent of $Z_\varepsilon$ and $\Pp\{E'>t\}=e^{-t}$ for $t\ge0$, conditioning on $Z_\varepsilon$ gives $A_\varepsilon=\Ee\,\alpha(Z_\varepsilon)$ and $B_\varepsilon=\Ee\,\beta(Z_\varepsilon)$, with
    \begin{equation}
        \label{eq:alphabeta}
        \alpha(x)=(1-e^{-x/d})\,\one_{\{x\ge0\}},
        \qquad
        \beta(x)=\one_{\{x\ge0\}}+e^{x/c}\,\one_{\{x<0\}},
    \end{equation}
    and hence, writing $v_\varepsilon=B_\varepsilon-F_\varepsilon$,    
    \begin{equation}
        \label{eq:windows}
        u_\varepsilon=\Ee\bigl[\one_{\{Z_\varepsilon\ge0\}}e^{-Z_\varepsilon/d}\bigr],
        \qquad
        v_\varepsilon=\Ee\bigl[\one_{\{Z_\varepsilon<0\}}e^{Z_\varepsilon/c}\bigr].
    \end{equation}
    In particular $w_\varepsilon=u_\varepsilon+v_\varepsilon=\Ee\bigl[e^{-Z_\varepsilon/d}\one_{\{Z_\varepsilon\ge0\}}+e^{Z_\varepsilon/c}\one_{\{Z_\varepsilon<0\}}\bigr]>0$, the integrand being strictly positive everywhere, so $\theta_\varepsilon$ is well defined.
    
    The identity \eqref{eq:transfer} follows from a Stein-type identity for the two-sided exponential shift: for every bounded Lipschitz $\phi$,
    \begin{equation}
        \label{eq:stein}
        \Ee\,\phi(Z_+)-\Ee\,\phi(Z_-)
        =a\,\Ee\,\phi'(Z_+)+b\,\Ee\,\phi'(Z_-),
    \end{equation}
    because $\phi(Z_+)-\phi(Z_-)=\int_{-bE}^{aE}\phi'(Y+t)\,dt$, whose expectation, computed by conditioning on $Y$, equals $a\,\Ee\,\phi'(Y+aE)+b\,\Ee\,\phi'(Y-bE)$ by Fubini, using $\Pp\{aE>t\}=e^{-t/a}$. The functions \eqref{eq:alphabeta} are bounded and Lipschitz, and differentiable except at $x=0$, with $\alpha'(x)=\tfrac1d\,e^{-x/d}\,\one_{\{x>0\}}$ and $\beta'(x)=\tfrac1c\,e^{x/c}\,\one_{\{x<0\}}$. The variables $Z_+$ and $Z_-$ have densities, since their independent summands $aE$ and $-bE$ do, so the point $x=0$ carries no mass, and \eqref{eq:windows} gives $\Ee\,\alpha'(Z_\varepsilon)=u_\varepsilon/d$ and $\Ee\,\beta'(Z_\varepsilon)=v_\varepsilon/c$. Taking $\phi=\alpha$ and $\phi=\beta$ in \eqref{eq:stein},
    \begin{equation}
        \label{eq:flux}
        d\,(A_+-A_-)=a\,u_++b\,u_-,
        \qquad
        c\,(B_+-B_-)=a\,v_++b\,v_-.
    \end{equation}
    Adding the two identities and substituting $B_\varepsilon=A_\varepsilon+w_\varepsilon$ gives the first identity below; rewriting the second through $F_\varepsilon=B_\varepsilon-v_\varepsilon$ gives the second:
    \begin{equation}
        \label{eq:combined}
        (c+d)(A_+-A_-)=(a-c)\,w_++(b+c)\,w_-,
        \qquad
        c\,(F_+-F_-)=(a-c)\,v_++(b+c)\,v_-.
    \end{equation}
    Now multiply the left side of \eqref{eq:transfer} by $c+d$; since $p\,(c+d)=c$, substituting \eqref{eq:combined} turns it into $(1-\theta_-)\bigl[(a-c)\,w_++(b+c)\,w_-\bigr]-\bigl[(a-c)\,v_++(b+c)\,v_-\bigr]$. Since $v_\varepsilon=(1-\theta_\varepsilon)\,w_\varepsilon$, the $(b+c)$ terms cancel and the $(a-c)$ terms leave $(a-c)\,w_+(\theta_+-\theta_-)$; dividing by $c+d$ proves \eqref{eq:transfer}.

    Now suppose $Y$ has a log-concave density $f$. Conditioning on $E$, the variables $Z_+$ and $Z_-$ have the densities
    \begin{equation}
        \label{eq:densities}
        f_+(x)=\int_0^\infty f(x-as)\,e^{-s}\,ds,
        \qquad
        f_-(x)=\int_0^\infty f(x+bs)\,e^{-s}\,ds.
    \end{equation}
    Fix $x\ge y$ and $s,t\ge0$: the numbers $x-as$ and $y+bt$ have the same sum as $x+bt$ and $y-as$ and lie between them, so they are complementary convex combinations of the latter, and adding the two concavity inequalities for $\log f$, valued in $[-\infty,\infty)$, gives $f(x-as)\,f(y+bt)\ge f(x+bt)\,f(y-as)$, the right side being $0$ when $f$ vanishes at $x+bt$ or $y-as$. Multiplying by $e^{-s-t}$ and integrating over $s,t\ge0$ yields the likelihood-ratio inequality
    \begin{equation}
        \label{eq:lr}
        f_+(x)\,f_-(y)\ \ge\ f_-(x)\,f_+(y),
        \qquad x\ge y.
    \end{equation}
    Finally, multiply \eqref{eq:lr} by $e^{-x/d}\,e^{y/c}>0$ and integrate over $x\ge0>y$: by \eqref{eq:windows}, which read $u_\varepsilon=\int_0^\infty e^{-x/d}f_\varepsilon(x)\,dx$ and $v_\varepsilon=\int_{-\infty}^0 e^{x/c}f_\varepsilon(x)\,dx$ in density form, this gives $u_+v_-\ge u_-v_+$, that is, $w_+w_-\,(\theta_+-\theta_-)\ge0$, and hence \eqref{eq:monge}.
\end{proof}

The inequality \eqref{eq:lr} is an instance of the preservation of total positivity under composition with the exponential translation kernel; see \citet[Chap.~1]{Karlin}.

To apply the lemma along the chain, it will be convenient to append a sentinel (fake) $G_k$, in which the term of $E_0$, too, is flipped:
\begin{equation}
    \label{eq:slackTop}
    G_k=-E_0-\sum_{\ell\in[k-1]}\beta_\ell E_\ell.
\end{equation}
For $0\le j\le k$, define, as in Lemma~\ref{lem:transfer} with $E'=E_k$,
\begin{equation}
    \label{eq:AFB}
    F_j=\Pp\{G_j\ge0\},
    \qquad
    A_j=\Pp\{G_j\ge dE_k\},
    \qquad
    B_j=\Pp\{G_j\ge -cE_k\},
\end{equation}
and the derived $u_j=F_j-A_j$, $v_j=B_j-F_j$, $w_j=B_j-A_j$, and $\theta_j=u_j/w_j$. For $j\le k-1$ these are three values of the statistic at the state $C_j$, with the new variable $X_k$ absent, high, and low: $F_j=K_{k-1}(C_j)$, and, since variable $k$ contributes $dE_k$ to the left side of the comparison when high and $cE_k$ to the right side when low, $A_j=K_k(H_j)$ and $B_j=K_k(C_j)$. The three events in \eqref{eq:AFB} are nested, so $u_j,v_j\ge0$; and conditioning on $G_j$, which is independent of $E_k$, exhibits $w_j=\Pp\{-cE_k\le G_j<dE_k\}$ as the expectation of a strictly positive function of $G_j$, so $w_j>0$ and $\theta_j\in[0,1]$ is well defined. At the initial state, $B_0=F_0=1$, so $\theta_0=1$; at the sentinel, $G_k<0$ almost surely, so $F_k=A_k=u_k=0$ and $\theta_k=0$.

\begin{lemma}[Local insertion]
    \label{lem:cutlift}
    Let $g:2^{[k]}\to\RR$ be increasing with respect to set inclusion. With $\lambda_J=\theta_J-\theta_{J+1}$, for $0\le J\le k-1$, one has $\lambda_J\ge0$, $\sum_{J=0}^{k-1}\lambda_J=1$, and
    \begin{equation}
        \label{eq:cutliftReward}
        \Ee_{\widehat\nu}\,g
        \le
        \sum_{J=0}^{k-1}\lambda_J\,\Ee_{\nu_{\mathcal C^J}}g,
    \end{equation}
    where $\mathcal C^J$ is the chain \eqref{eq:liftedpath} with $X_k$ inserted after state $C_J$. In particular, at least one insertion position~$J$ satisfies $\Ee_{\nu_{\mathcal C^J}}g\ge \Ee_{\widehat\nu}\,g$.
\end{lemma}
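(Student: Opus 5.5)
The plan is to prove the two assertions about $\lambda_J$ from the monotonicity half of Lemma~\ref{lem:transfer}, and then to prove \eqref{eq:cutliftReward} by checking it on indicators of up-sets, where both sides become explicit telescoping expressions.

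\emph{Weights.} For $0\le j\le k-1$, the slacks $G_j$ and $G_{j+1}$ share all terms except one exponential. For $j<k-1$ this is $E_\ell$, where $\ell$ is the variable added at step $j+1$ of $\mathcal C$. It enters with $+\gamma_\ell$ in $G_j$ and with $-\beta_\ell$ in $G_{j+1}$. For the sentinel step $j=k-1$ the differing exponential is $E_0$, with coefficients $+1$ and $-1$. So I set $Y$ equal to the common part, and $(a,b)=(\gamma_\ell,\beta_\ell)$ or $(1,1)$ accordingly, with $c=\gamma_k$, $d=\beta_k$, $E'=E_k$. Then $(G_j,G_{j+1})=(Z_+,Z_-)$ in Lemma~\ref{lem:transfer}. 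For $k\ge2$, $Y$ is a nonempty sum of independent $\pm$-scaled exponentials, so it has a log-concave density, and \eqref{eq:monge} gives $\theta_j\ge\theta_{j+1}$. For $k=1$ the sequence is just $\theta_0=1$, $\theta_1=0$, so nothing needs proving. Hence $\lambda_J\ge0$, and $\sum_J\lambda_J=\theta_0-\theta_k=1$ by the boundary values computed above.

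\emph{Reduction to up-sets.} Both sides of \eqref{eq:cutliftReward} are linear in $g$ and exact on constants. Every increasing $g$ on $2^{[k]}$ is a constant plus a nonnegative combination of indicators of up-sets. It therefore suffices to take $g=\one_U$ with $U$ upward closed. Let $s$ be the least index with $C_s\in U$, and $t$ the least with $H_t\in U$, setting an index equal to $k$ if none exists. Since $C_j\subset H_j$, we have $t\le s$.

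\emph{Masses.} Under $\widehat\nu$, the state $C_j$ has mass $F_j-F_{j+1}$, with $F_k=0$. So $\widehat\nu(U)=(1-p)F_s+pF_t$. Under $\nu_{\mathcal C^J}$, read off from \eqref{eq:chainmeasure} using $B_j=K_k(C_j)$ and $A_j=K_k(H_j)$:
\begin{itemize}
\item the mass of $U$ is $A_t$ if $J<t$;
\item it is $A_J$ if $t\le J<s$;
\item it is $B_s$ if $J\ge s$, because the masses $B_s-B_J$, $w_J$ and $A_J$ sum to $B_s$.
\end{itemize}
Hence the right side of \eqref{eq:cutliftReward} is
\[
R(s)=(1-\theta_t)A_t+\sum_{J=t}^{s-1}(\theta_J-\theta_{J+1})A_J+\theta_sB_s .
\]
Put $\Delta(s)=R(s)-(1-p)F_s-pF_t$, with $t$ fixed. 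At $s=t$, $R=A_t+\theta_tw_t=A_t+u_t=F_t$, so $\Delta(t)=0$. A short computation, using $F_j=A_j+\theta_jw_j$, gives
\[
\Delta(s+1)-\Delta(s)=(1-\theta_{s+1})(A_s-A_{s+1})-p\,(F_s-F_{s+1}).
\]
This is exactly the left side of \eqref{eq:transfer} for the pair $(G_s,G_{s+1})$. It therefore equals $\tfrac{a-c}{c+d}\,w_s(\theta_s-\theta_{s+1})$.

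\emph{Sign.} Here $a=\gamma_\ell$ with $\ell\le k-1$, so $a\ge\gamma_k=c$ by the sorting. At the sentinel step $a=1\ge\gamma_k$. Together with $\theta_s\ge\theta_{s+1}$ and $w_s>0$, each increment is nonnegative. Hence $\Delta(s)\ge0$ for all $s\ge t$, which is \eqref{eq:cutliftReward}. The final claim follows since a convex combination is at most its largest term.

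The main obstacle is the bookkeeping that makes the increment of $\Delta$ match \eqref{eq:transfer} exactly, including the sentinel conventions $F_k=A_k=0$ and the degenerate cases $t=k$ or $s=k$. The analytic content is already supplied by Lemma~\ref{lem:transfer}.
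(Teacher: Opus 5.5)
Your proof is correct and is essentially the paper's argument. Your increments $\Delta(s+1)-\Delta(s)$ are exactly the paper's transfer coefficients $\eta_s$, so $\Delta(s)=\sum_{j=t}^{s-1}\eta_j$ is the paper's gap formula $\sum_j\eta_j\,[g(H_j)-g(C_j)]$ specialized to $g=\one_U$; nonnegativity comes from the same ingredients, namely the transfer identity, the monotonicity of $\theta$, and the sorting $a\ge c$. The paper skips the layer-cake reduction by showing directly that $\sum_J\lambda_J\nu_{\mathcal C^J}$ and $\widehat\nu$ put equal mass on each pair $\{C_j,H_j\}$, and it settles the terminal edge with the trivial bound $\theta_{k-1}\ge 0=\theta_k$ rather than log-concavity.
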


\begin{proof}
    By \eqref{eq:chainmeasure} and $F_j=K_{k-1}(C_j)$, the measure $\nu_{\mathcal C}$ places mass $F_j-F_{j+1}$ at $C_j$ for every $0\le j\le k-1$, and at the last state the mass is $K_{k-1}(C_{k-1})=F_{k-1}-F_k=F_{k-1}$, since $F_k=0$ (the sentinel being negative); the independent reveal of $X_k$ then splits each mass:
    \begin{equation}
        \label{eq:hatnu}
        \widehat\nu(\{C_j\})=(1-p)\,(F_j-F_{j+1}),
        \qquad
        \widehat\nu(\{H_j\})=p\,(F_j-F_{j+1}),
        \qquad 0\le j\le k-1.
    \end{equation}

    Let $\bar\nu=\sum_{J=0}^{k-1}\lambda_J\,\nu_{\mathcal C^J}$, so that $\sum_{J=0}^{k-1}\lambda_J\,\Ee_{\nu_{\mathcal C^J}}g=\Ee_{\bar\nu}\,g$. Along $\mathcal C^J$, the values of $K_k$ are $B_j$ at $C_j$ and $A_j$ at $H_j$, so by \eqref{eq:chainmeasure} its chain measure places mass $B_j-B_{j+1}$ at $C_j$ for $j<J$, mass $B_J-A_J=w_J$ at $C_J$, and mass $A_j-A_{j+1}$ at $H_j$ for $j\ge J$, the mass at the last state $H_{k-1}$ being $A_{k-1}=A_{k-1}-A_k$, since $A_k=0$. Averaging over $J$, with $\sum_{J>j}\lambda_J=\theta_{j+1}$ and $\sum_{J\le j}\lambda_J=1-\theta_{j+1}$,
    \begin{equation}
        \label{eq:barnu}
        \bar\nu(\{C_j\})=\theta_{j+1}\,(B_j-B_{j+1})+\lambda_j\,w_j,
        \qquad
        \bar\nu(\{H_j\})=(1-\theta_{j+1})\,(A_j-A_{j+1}).
    \end{equation}
    Since $B_j=A_j+w_j$ and $F_j=A_j+\theta_jw_j$, the two masses in \eqref{eq:barnu} add up to $F_j-F_{j+1}$. Thus $\bar\nu$ and $\widehat\nu$ give each pair $\{C_j,H_j\}$ the same mass and differ only in how they split it between the two states, and their difference moves mass only within pairs:
    \begin{equation}
        \label{eq:transport}
        \bar\nu-\widehat\nu
        =\sum_{j=0}^{k-1}\eta_j\,(\delta_{H_j}-\delta_{C_j}),
        \qquad
        \eta_j=(1-\theta_{j+1})\,(A_j-A_{j+1})-p\,(F_j-F_{j+1}),
    \end{equation}
    where $\delta_S$ denotes the point mass at $S$, and the transfer coefficient $\eta_j=\bar\nu(\{H_j\})-\widehat\nu(\{H_j\})$ is the excess mass that $\bar\nu$ places on $H_j$. Consequently
    \begin{equation}
        \label{eq:gapsum}
        \sum_{J=0}^{k-1}\lambda_J\,\Ee_{\nu_{\mathcal C^J}}g
        -\Ee_{\widehat\nu}\,g
        =\sum_{j=0}^{k-1}\eta_j\,\bigl[g(H_j)-g(C_j)\bigr],
    \end{equation}
    with every bracket nonnegative, $g$ being increasing and $C_j\subset H_j$. It remains to show that every transfer is upward: $\eta_j\ge0$ for all $j$.

    Fix $1\le j\le k$. The pair $G_{j-1},G_j$ differs in one exponential $E$, with coefficients $a$ and $-b$:
    \begin{equation}
        \label{eq:edge}
        G_{j-1}=Y+aE,
        \qquad
        G_j=Y-bE,
    \end{equation}
    with the common part $Y$ independent of $E$ and of $E_k$: for $j\le k-1$, $E=E_i$ with $i$ the unique element of $C_j\setminus C_{j-1}$, $a=\gamma_i$, and $b=\beta_i$; for the terminal edge $j=k$, $E=E_0$ and $a=b=1$, by \eqref{eq:slack} and \eqref{eq:slackTop}. In every case $a\ge\gamma_k=c$, by the sorting---at $j=k$ this is the constraint $\gamma_k\le1$, its only use. Lemma~\ref{lem:transfer}, applied with $Z_+=G_{j-1}$, $Z_-=G_j$, and $E'=E_k$, then has $F_\pm,A_\pm,B_\pm,\ldots$ equal to the chain quantities indexed $j-1$ and $j$, and the identity \eqref{eq:transfer} becomes
    \begin{equation}
        \label{eq:etaFormula}
        \eta_{j-1}=\frac{a-c}{c+d}\,w_{j-1}\,(\theta_{j-1}-\theta_j).
    \end{equation}
    All three factors on the right are nonnegative: $a\ge c$ by the sorting, $w_{j-1}>0$, and $\theta_{j-1}\ge\theta_j$, which we now prove. For $j\le k-1$, the common part $Y$ is a sum of independent terms of the forms $E_0$, $\gamma_\ell E_\ell$, and $-\beta_\ell E_\ell$, whose densities are log-concave, and convolution preserves log-concavity; hence $\theta_{j-1}\ge\theta_j$ by \eqref{eq:monge}. At the terminal edge, $\theta_{k-1}\ge0=\theta_k$ directly. Therefore $\theta_0\ge\theta_1\ge\cdots\ge\theta_k$ and, by \eqref{eq:etaFormula}, every $\eta_j\ge0$; with $\theta_0=1$ and $\theta_k=0$, the monotonicity also proves the claims on the weights $\lambda_J=\theta_J-\theta_{J+1}$: they are nonnegative, and $\sum_{J=0}^{k-1}\lambda_J=\theta_0-\theta_k=1$. Hence, \eqref{eq:gapsum} proves \eqref{eq:cutliftReward}, and the final assertion follows from the fact that a convex average cannot exceed the maximum of its terms.
\end{proof}

\section{Reduction to the general theorem}
\label{sec:global}

Here we pass from mean-one two-point laws to arbitrary independent nonnegative variables with means at most one. The mean-one case is handled by decomposing each marginal into a mixture of two-point laws; the general case reduces to it by rescaling.

\begin{lemma}[Two-point mixture]
    \label{lem:decomp}
    Every probability law $\mu$ on $[0,\infty)$ with mean $1$ is a mixture of mean-one laws supported on at most two points, one in $[0,1]$ and one in $[1,\infty)$; the degenerate law $\delta_1$ is allowed as a one-point component of the mixture.
\end{lemma}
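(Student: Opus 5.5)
The plan is to write $\mu$ explicitly as a mixture of two-point laws by pairing mass below $1$ with mass above $1$. First split off the atom at $1$. Write $\mu=\mu(\{1\})\,\delta_1+\mu_L+\mu_R$, where $\mu_L$ is the restriction of $\mu$ to $[0,1)$ and $\mu_R$ the restriction to $(1,\infty)$. The mean-one condition $\int(x-1)\,\mu(dx)=0$ reads
\[
\int_{[0,1)}(1-x)\,\mu_L(dx)=\int_{(1,\infty)}(y-1)\,\mu_R(dy)=:m.
\]
If $m=0$, then $\mu_L=\mu_R=0$ and $\mu=\delta_1$, so there is nothing to prove. Otherwise $m>0$, and $m<\infty$ because $\mu$ has a finite mean.

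Next, for $x\in[0,1)$ and $y\in(1,\infty)$, let $\rho_{x,y}$ be the unique mean-one law on $\{x,y\}$:
\[
\rho_{x,y}=\frac{y-1}{y-x}\,\delta_x+\frac{1-x}{y-x}\,\delta_y .
\]
This is exactly the two-point form \eqref{eq:twopointlaw}--\eqref{eq:pi}, with $\gamma=1-x$ and $\beta=y-1$. Define the mixing measure on pairs $(x,y)$ by
\[
M(dx,dy)=\frac{(y-x)}{m}\,\mu_L(dx)\,\mu_R(dy).
\]
The key computation is that $\int\rho_{x,y}\,M(dx,dy)=\mu_L+\mu_R$. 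The weight of $\delta_x$ is
\[
\int\frac{y-1}{y-x}\cdot\frac{y-x}{m}\,\mu_R(dy)\,\mu_L(dx)=\frac{1}{m}\int(y-1)\,\mu_R(dy)\,\mu_L(dx)=\mu_L(dx),
\]
and symmetrically the weight of $\delta_y$ is $\frac1m\int(1-x)\,\mu_L(dx)\,\mu_R(dy)=\mu_R(dy)$. Since $y-x>0$, the measure $M$ is nonnegative. Its total mass is $\mu_L([0,1))+\mu_R((1,\infty))=1-\mu(\{1\})$, which follows by applying the identity to the constant test function $1$. Hence
\[
\mu=\mu(\{1\})\,\delta_1+\int\rho_{x,y}\,M(dx,dy)
\]
is a mixture of mean-one laws, each supported on one point in $[0,1]$ and one in $[1,\infty)$, with mixing probability $\mu(\{1\})\,\delta_{\delta_1}+M$.

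The main thing to check carefully is measurability and integrability. The mixing weights must integrate correctly with Tonelli, since everything is nonnegative. We must also confirm that $\int y\,\mu_R(dy)<\infty$, which holds because the mean is finite, so that $M$ is finite. We should also note that $\rho_{x,y}$ depends measurably on $(x,y)$, so that the mixture is well defined as a law on the space of probability measures. None of these points is a real obstacle.
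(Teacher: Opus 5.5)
Your proposal is correct and is essentially the paper's proof: it uses the same mixing measure $\frac{1}{m}(y-x)\,\mu(dx)\,\mu(dy)$ on $\{x<1<y\}$ over the mean-one laws $\rho_{x,y}=Q_{x,y}$, plus the atom $\mu(\{1\})\,\delta_1$, and the balance $\int(1-x)\,\mu_L(dx)=\int(y-1)\,\mu_R(dy)=m$ does all the work. The only difference is cosmetic: the paper checks the identity on the sets $[0,t]$ and invokes uniqueness of distribution functions, whereas your Tonelli computation verifies it on every Borel set at once.
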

\begin{proof}
    For a point $z\in[0,\infty)$, let $\delta_z$ denote the Dirac measure at $z$, defined by $\delta_z(B)=\one_{\{z\in B\}}$ for Borel sets $B\subseteq[0,\infty)$. For $0\le x<1<y$, let
    \begin{equation}
        \label{eq:Qdef}
        Q_{x,y}
        =\frac{y-1}{y-x}\,\delta_x
        +\frac{1-x}{y-x}\,\delta_y,
    \end{equation}
    a probability measure with mean one. We represent $\mu$ as a mixture of $\delta_1$ and laws $Q_{x,y}$, in which, informally, the pair $(x,y)$ is drawn as follows: take two independent draws from $\mu$, one conditioned to lie below $1$ and one conditioned to lie above $1$, and bias the pair by its distance $y-x$.

    Let $X\sim\mu$ and define $M=\Ee(1-X)^+=\Ee(X-1)^+$. This equality follows from $\Ee(X-1)=0$, and the two expectations are finite because $0\le(1-X)^+\le1$ and $(X-1)^+\le X$ with $\Ee X=1$. If $M=0$, then $X=1$ almost surely, $\mu=\delta_1$, and we are done; so assume $M>0$. Then $\mu([0,1))>0$ and $\mu((1,\infty))>0$. Define the measure
    \begin{equation}
        \label{eq:themix}
        \widetilde\mu
        =\mu(\{1\})\,\delta_1
        +\frac{1}{M}
        \iint_{x<1<y}(y-x)\,Q_{x,y}\,\mu(dx)\,\mu(dy),
    \end{equation}
    a combination of mean-one laws on at most two points with nonnegative weights; we show below that the total weight is $1$ and that $\widetilde\mu=\mu$.

    Evaluate $\widetilde\mu$ on $[0,t]$ for $t\ge0$. Since $\delta_z([0,t])=\one_{\{z\le t\}}$, the definition \eqref{eq:Qdef} gives
    \begin{equation}
        \label{eq:atommass}
        (y-x)\,Q_{x,y}([0,t])
        =(y-1)\,\one_{\{x\le t\}}
        +(1-x)\,\one_{\{y\le t\}}.
    \end{equation}
    Let $Y$ be an independent copy of $X$. Integrating \eqref{eq:atommass} against $\mu(dx)\,\mu(dy)$ on $\{x<1<y\}$ is taking the expectation of the corresponding function of $(X,Y)$, and each term factorizes by independence:
    \begin{align}
        \Ee\bigl[(Y-1)\,\one_{\{X\le t\}}\,\one_{\{X<1<Y\}}\bigr]
        &=\Ee(Y-1)^+\;\Pp\{X\le t,\ X<1\}
        =M\,\Pp\{X\le t,\ X<1\},
        \label{eq:termone}\\
        \Ee\bigl[(1-X)\,\one_{\{Y\le t\}}\,\one_{\{X<1<Y\}}\bigr]
        &=\Ee(1-X)^+\;\Pp\{Y\le t,\ Y>1\}
        =M\,\Pp\{X\le t,\ X>1\},
        \label{eq:termtwo}
    \end{align}
    where the last equality holds because $Y$ has the law of $X$. Hence
    \begin{equation}
        \label{eq:mixidentity}
        \widetilde\mu([0,t])
        =\mu(\{1\})\,\one_{\{1\le t\}}
        +\Pp\{X\le t,\ X<1\}
        +\Pp\{X\le t,\ X>1\}
        =\Pp\{X\le t\}.
    \end{equation}
    In particular, letting $t\to\infty$ shows that $\widetilde\mu([0,\infty))=1$, so \eqref{eq:themix} is a probability mixture; and by \eqref{eq:mixidentity} its distribution function agrees with that of $\mu$. A law on $[0,\infty)$ is determined by its distribution function, so $\widetilde\mu=\mu$.
\end{proof}

With the two-point mixture in hand, Theorem~\ref{thm:main} follows.

\begin{proof}[Proof of Theorem~\ref{thm:main}]
    First assume $\Ee X_i=1$ for every $i$. By Lemma~\ref{lem:decomp}, the law of each $X_i$ is that of a two-stage draw: draw a latent parameter $\xi_i$, equal to the symbol $\delta_1$ or to a pair $(x_i,y_i)$ with $0\le x_i<1<y_i$, then draw $X_i$ from $\delta_1$ or $Q_{x_i,y_i}$ accordingly; perform these draws independently across $i$, and write $\xi=(\xi_1,\ldots,\xi_n)$. The map $\xi\mapsto\Pp\{K(X)\le\alpha\mid\xi\}$ is measurable: conditional on~$\xi$, the system is an independent mean-one two-point system of Section~\ref{sec:twopoint}, where $\xi_i=\delta_1$ corresponds to the degenerate two-point parameters $\gamma_i=0$, $\beta_i=1$ (the value of $\beta_i$ is immaterial, since $p_i=0$). The probability is the finite sum $\sum_{S}\pi(\{S\})\,\one_{\{K_\xi(S)\le\alpha\}}$, where $\pi(\{S\})$ are the product weights \eqref{eq:productmeasure}, and $K_\xi$ instantiates \eqref{eq:Kcube} for the parameters $\xi$ at level $n$. The weights are continuous in $\xi$; so is each value $K_\xi(S)$, by dominated convergence in \eqref{eq:Kexp}; and $\one_{\{\cdot\,\le\alpha\}}$ is Borel on $\RR$. Conditional on $\xi$, Proposition~\ref{prop:twopoint} applies (its proof deletes the variables deterministic at $1$ and sorts the rest), giving $\Pp\{K(X)\le\alpha\mid\xi\}\le\alpha$. Averaging over $\xi$ proves the mean-one case.

    If $m_i=\Ee X_i\le1$, rescale: set $X_i'=X_i/m_i$ for $m_i>0$ and $X_i'=1$ for $m_i=0$ (in which case $X_i=0$ almost surely). The $X_i'$ are independent, nonnegative, mean one, and $X_i'\ge X_i$ almost surely. Since $K$ is coordinatewise nonincreasing by \eqref{eq:Kexp}, $K(X')\le K(X)$, so $\{K(X)\le\alpha\}\subseteq\{K(X')\le\alpha\}$, and the mean-one case applied to $X'$ completes the proof.
\end{proof}

\end{document}